\newtheorem{theorem}{Theorem}[section]
\theoremstyle{definition}
\newtheorem{example}[theorem]{Example}
\newtheorem{cor}[theorem]{Corollary}
\theoremstyle{remark}
\newtheorem{remark}[theorem]{Remark}
\numberwithin{equation}{section}
\begin{document}

\title{Some calibrated surfaces in manifolds with density}

\author{Doan The Hieu}
\address{Hue geometry Group}
\curraddr{ College of Education, Hue University
 34 Le Loi, Hue, Vietnam}
\email{ dthehieu@yahoo.com}
\thanks{The author was supported in part by a Nafosted Grant}

\subjclass[2000]{Primary 53C25; Secondary 53A20}

\date{March 20, 2010 and, in revised form .}


\keywords{Manifolds with density, minimizing}

\begin{abstract}
Hyperplanes, hyperspheres and hypercylinders in $\Bbb R^n$ with suitable densities are proved to be weighted area-minimizing by a calibration argument.
\end{abstract}

\maketitle



\section{Introduction}
A manifold with density is a Riemannian manifold $M$  endowed with a positive function (density) $e^{\psi}$  used to weight both volume and perimeter. The weighted volume and perimeter elements are defined as $e^\psi dV$ and $e^\psi dA,$ where $dV$ and $dA$ are the Riemannian volume and perimeter elements.

  A typical example of  such manifolds is Gauss space $G^n,$ that is $\Bbb R^n$ with Gaussian probability density $(2\pi)^{-\frac n2}e^{-\frac{r^2}2}.$ Gauss space has many applications to probability and statistics. For more details about manifolds with density,  we refer the reader to \cite{mo1}, \cite{mo2}, \cite {RCBM} and the entry ``Manifolds  with density'' at Morgan's blog http://blogs.williams.edu/Morgan/.

  Manifolds with density are a good setting to extend some variational problems in geometry such as isoperimetric problems, minimizing networks, minimizing surfaces\ldots. It is also good to consider some problems concerned with notions of curvature.

   Following Gromov (\cite[p. 213]{gr}),  the natural generalization of the mean curvature, called weighted mean curvature, of a hypersurface in a manifold with density $e^{\psi}$ is defined as
\begin{equation}      H_{\psi}=H-\frac 1{n-1}\frac{d\psi}{d{\bf n}},\end{equation}
where $H$ is the classical mean curvature and {\bf n} is the normal vector field of the hypersurface.
The definition of the weighted mean curvature is fit for the first variation of weighted perimeter of a smooth region (see \cite{co2}, \cite{RCBM}).


For a stationary (i.e. with vanishing first variation of perimeter) smooth open set $\Omega\subset\Bbb R^{n}$ endowed with a smooth density $e^\psi,$ let $N$ be the inward unit normal vector to
$\Sigma= \partial\Omega,$ and $H_{\psi}$ be the
constant weighted mean curvature of $\Sigma$ with respect to $N.$ Consider a variation of $\Omega$ with associated
vector field $X = uN$ on $\Sigma.$

 Bayle \cite{ba} computed the second variation formula of the functional $P-H_{\psi}V$ for any variation of a stationary set  and obtained the following formula
\begin{equation}\label{svf}(P-H_{\psi}V)''=Q_{\psi}(u,u)=\int_{\Sigma}e^\psi
(|\nabla_{\Sigma}u|^2-|
\sigma|^2u^2)da+\int_{\Sigma}e^\psi u^2(\nabla^2\psi)(N,N)da,\end{equation}
where $\nabla_{\Sigma}u$ is the gradient of $u$ relative to $\Sigma$,  $|\sigma|^2$ is the squared sum of the principal curvatures of $\Sigma$ and  $\nabla^2\psi$ is the Euclidean Hessian of $\psi.$

The situation is the same as in the Euclidean case, $\Omega$ is stable ($P''(0)\ge 0$) if and only if $Q_\psi(u,u)\ge 0$ for a variation satisfying the condition $\int e^\psi udA=0$. This condition means  that $u$ is orthogonal to $e^\psi$ in $L^2(\Sigma)$ and it is proved that any such $u$ is the normal component of a vector field associated to a volume-preserving variation of $\Omega$ (see \cite{baca}, \cite{RCBM}).

In $\Bbb R^n$  with a log-convex spherical density, balls about the origin are stable and it is conjectured that they are the only isoperimetric regions (see \cite{RCBM}).

We are interested in the question of what conditions on density make some constant weighted mean curvature hypersurfaces stable and weighted area-minimizing. Weighted area-minimizing means having least weighted perimeter in a homology class (Section \ref{sec2}) or under compact, weighted-volume-preserving deformations (Sections \ref{sec3}, \ref{sec4}, \ref{sec5}). We consider three cases: hyperplanes in $\Bbb R^n$ with a smooth density $\delta=e^{\varphi(x)+ \psi(x_n)},$ where $x=(x_1, x_2,\ldots, x_{n-1});$  hyperspheres in $\Bbb R^n-\{O\}$ with a smooth spherical density and hypercylinders in $\Bbb R^n-\{O\}\times\Bbb R^k$ with a smooth cylindrical density. The proofs are an application of Stokes' theorem as in the calibration method.

We begin, in Section \ref{sec2},  with The Fundamental Theorem of Weighted Calibrations and some applications including a proof that a weighted minimal hypergraph in $\Bbb R^n$ with a non-depending on the last coordinate density is weighted area-minimizing in its homology class. Some other examples of weighted calibrated submanifolds are also presented in this section.


\section{Calibrations on manifolds with density} \label{sec2}

Let $M$ be a Riemannian manifold with a smooth density $e^\psi$ and $\Phi$ is a $k$-differential form. We define the weighted exterior derivative with density $d_\psi$ as follows
$$d_\psi(\Phi):=e^{-\psi}de^\psi\Phi.$$
The definition of $d_\psi$ appeared first in \cite{li}  and \cite{wi}. A $k$-differential form $\Phi$ is called $d_\psi$-closed if $d_\psi(\Phi)=0$ and this is equivalent to $de^\psi\Phi=0.$ A $d_\psi$-closed differential form is called a weighted calibration if it has comass one. For the definition of the comass of a $k$-differential form, and calibrated geometry we refer to \cite{hala}. A $k$-submanifold $N$ of $M$ is called a weighted calibrated submanifold, calibrated by the weighted calibration $\Phi,$ if $\Phi$ attains its maximum on tangent planes of $N$ almost everywhere. Here the Riemannian volume  and the weighted volume (denoted by $\text{Vol}_\psi$) of $N$ are $\int_N\Phi$ and $\int_Ne^\psi\Phi,$ respectively. By a similar proof as that of The Fundamental Theorem of Calibrations  with density 1 (see \cite{hala}, \cite [Section 6.4, 6.5] {mo2}), we have
\begin{theorem}\label{theo21}
Every weighted calibrated submanifold with or without boundary is weighted area-minimizing in its homology class.
\end{theorem}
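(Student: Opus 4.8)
The plan is to mimic the classical Fundamental Theorem of Calibrations, replacing the ordinary exterior derivative with the weighted one $d_\psi$ and the ordinary volume with the weighted volume. Let $\Phi$ be a weighted calibration (so $\Phi$ is $d_\psi$-closed, i.e.\ $d(e^\psi\Phi)=0$, and has comass one), and let $N$ be a $k$-submanifold calibrated by $\Phi$. Let $N'$ be any submanifold homologous to $N$, say $N-N'=\partial R$ for some $(k+1)$-chain $R$ (with matching boundaries if $N$, $N'$ have boundary). I would first write down the key observation that the weighted volume admits the representation $\mathrm{Vol}_\psi(N)=\int_N e^\psi\Phi$, which holds because $N$ is calibrated: on almost every tangent plane of $N$, $\Phi$ attains its maximal value, so the weighted $k$-form $e^\psi\Phi$ restricts to the weighted volume element $e^\psi\,dA$ there.

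The central step is to apply Stokes' theorem to the \emph{ordinary} form $e^\psi\Phi$, not to $\Phi$ itself. Since $d(e^\psi\Phi)=0$ by the $d_\psi$-closedness hypothesis, Stokes gives
\begin{equation}
\int_N e^\psi\Phi-\int_{N'} e^\psi\Phi=\int_{\partial R} e^\psi\Phi=\int_R d(e^\psi\Phi)=0,
\end{equation}
so that $\int_N e^\psi\Phi=\int_{N'} e^\psi\Phi$. This is exactly where $d_\psi$ was designed to do its job: closing $e^\psi\Phi$ as an ordinary form is precisely the condition $d_\psi\Phi=0$, which lets the weighted volume be computed as a cohomological quantity invariant within the homology class.

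Finally I would combine the representation formula with the comass-one condition to turn this equality into the desired inequality. By the calibration equality above, $\mathrm{Vol}_\psi(N)=\int_N e^\psi\Phi=\int_{N'} e^\psi\Phi$. On $N'$ the comass bound gives the pointwise inequality $\Phi|_{T_p N'}\le dA_{N'}$, hence $\int_{N'} e^\psi\Phi\le\int_{N'} e^\psi\,dA=\mathrm{Vol}_\psi(N')$. Chaining these yields $\mathrm{Vol}_\psi(N)\le\mathrm{Vol}_\psi(N')$, which is the claim.

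The main obstacle I anticipate is not the algebra but the bookkeeping needed to make the comass-one normalization and the weighted volume element match up cleanly, and to handle the boundary case correctly (ensuring $\partial N=\partial N'$ so that $N-N'$ is a cycle relative to the boundary and Stokes applies with no boundary contribution). One must verify that ``comass one'' for $\Phi$ really yields the pointwise bound $e^\psi\Phi\le e^\psi\,dA$ on arbitrary competitors, uniformly with the orientation conventions, and that the calibrated equality holds almost everywhere on $N$ as assumed. Since the excerpt explicitly says the proof is ``by a similar proof as that of The Fundamental Theorem of Calibrations with density $1$,'' I expect these to be the routine-but-essential points to check, with the genuinely new ingredient being the single line where $d_\psi$-closedness is converted into $d(e^\psi\Phi)=0$ for Stokes.
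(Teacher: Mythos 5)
Your proposal is correct and follows essentially the same route as the paper: use the calibration equality $\mathrm{Vol}_\psi(N)=\int_N e^\psi\Phi$, the comass-one bound $\int_{N'}e^\psi\Phi\le\mathrm{Vol}_\psi(N')$ on the competitor, and Stokes' theorem applied to $e^\psi\Phi$, whose ordinary closedness is exactly the $d_\psi$-closedness hypothesis. The only difference is cosmetic bookkeeping (you establish the equality $\int_N e^\psi\Phi=\int_{N'}e^\psi\Phi$ first and then apply the comass bound, while the paper chains the inequality and Stokes in a single line), so the two arguments are the same proof.
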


\begin{proof}
Let $N$ and $\overline N$ be $k$-submanifolds in the same homology class, i.e. $\partial N=\partial\overline N$ and $N-\overline N=\partial A$ for some $(k+1)$-chain $A.$ Suppose that $N$ is calibrated by weighted calibration $\Phi.$ Then
\begin{equation}\text{Vol}_\psi(N)-\text{Vol}_\psi\overline N\le \int_Ne^\psi\Phi-\int_{\overline N}e^\psi\Phi=\int_{N-\overline N}e^\psi\Phi=\int_{\partial A}e^\psi\Phi.
\end{equation}
Because $\Phi$ is $d_\psi$-closed, by Stokes' theorem the last term vanishes and the theorem is proved.

\end{proof}

The following examples illustrate some applications of Theorem \ref{theo21}.

\begin{example}\label{ex22}
It is well known that in $\Bbb R^n$ with a constant density, minimal hypersurfaces are area-minimizing locally. We will show that the result is also true in some cases of non-constant density.


 Suppose $S$ be the minimal hypergraph defined by $x_n=f(x_1, x_2, \ldots, x_{n-1})$  in $\Bbb R^n=\Bbb R^{n-1}\times \Bbb R$ over the domain $U\subset\Bbb R^{n-1},$ where $\Bbb R^{n-1}$ and $\Bbb R$ endowed  density $e^\psi$ and $1$, respectively. Let ${\bf n}$ is its unit normal field and consider the smooth extension  of ${\bf n}$ by the translation along $x_n$-axis, also denoted by ${\bf n},$ in the cylinder $U\times \Bbb R.$
  
  It is not difficult to see that the $(n-1)$-differential form defined by
 $$w(X_1, X_2,\ldots, X_{n-1})= \det(X_1, X_2,\ldots, X_{n-1}, {\bf n})$$
 where $X_i,\ i=1,2,\ldots, n-1$ are smooth vector fields on $S,$ has comass 1.

 Moreover,
\begin{align}\nonumber
d(e^\psi w)&= \text{div}(e^\psi{\bf n})dV_M\\
 &= (e^\psi\text{div}({\bf n})+e^\psi\langle\nabla\psi, {\bf n}\rangle)dV_M\\
 &=(-e^\psi(n-1)H+e^\psi\langle\nabla\psi, {\bf n}\rangle)dV_M=0,
\end{align}
because $S$ is minimal.
Thus, $w$ is a weighted calibration. Obviously, $w$ calibrates $S$ in $U\times \Bbb R.$

\end{example}

\begin{example}
Consider  a product $M\times M',$ where $M$ is a Riemannian $n$-manifold with density 1 and $M'$ is another $m$-manifold with density $e^\psi.$ Denote by $dV_{M'}$ the  Riemannian volume element on $M'.$ Let $\Phi$ be a $k$-calibration on $M$ calibrating $k$-submanifold $N\subset M.$ Then $\Phi\wedge dV_M'$ is a weighted $(n+k)$-calibration in $M\times M'$ calibrating $N\times M'.$ Below are some concrete examples
\begin{enumerate}
\item Let $\Phi=1;$ then $dV_{M'}$  calibrates
$\{x\}\times M'$ for any $x\in M.$
\item In $\Bbb R^n$ with density independent of $m$ last coordinates $x_{n-m+1},\ldots, x_n,\ \Phi=dx_1\wedge dx_2\wedge\ldots\wedge dx_{n-m}$ is a weighted calibration calibrating every $(n-m)$-plane $\{x_i=\text{const.}, i=n-m+1,\ldots, n\}$ (see also \cite{cami}, Section 2.1).
\item The 3-covector $\Phi=(e^*_1\wedge e^*_2+e^*_3\wedge e^*_4)\wedge e^*_5$ is a calibration in $\Bbb R^5$ with density 1. With complex structure $Je_1=e_2,\ Je_3=e_4$ on $\Bbb R^4,$\ $\Phi$ calibrates every complex curve in $\Bbb R^4$  times $\Bbb R.$ $\Phi$ is also a weighted calibration on $\Bbb R^5$ with a density depending only on the last coordinate $x_5$ and calibrates the same 3-submanifolds as in the case with density 1.
\end{enumerate}
\end{example}

\begin{example}
Consider the cylindrical coordinate system $(\rho,\varphi,z)$ on $\Bbb R^2-{O}\times\Bbb R$ (see \ref{sec4}) with density $\rho^{-1}.$ The area element $dA=\rho d\varphi\wedge dz,$ is a weighted calibration. It calibrates  every cylinder about the $z$-axis.
\end{example}

\begin{example}
Consider the spherical coordinate system $(r,\varphi)$  on $\Bbb R^n-\{O\}$ (see Section \ref{sec3}) with density $r^{1-n}.$ The perimeter element $dA=r^{n-1}d\omega,$  is a weighted calibration calibrating every  hypersphere about the origin.

\end{example}

\section{Weighted minimizing hyperspheres}\label{sec3}

Consider $\Bbb R^n$ with a spherical density $e^{\psi(r)}.$ If the density is log-convex, hyperspheres about the origin are stable and it is conjectured that they are the only isoperimetric regions. This conjecture was proved in the real line, in $\Bbb R^n$ with specific density $e^{r^2}$ and in $\Bbb R^2$ with density $e^{r^p},\ p\ge 2.$ The conjecture is still open in general (see \cite{bo}, \cite{RCBM}, \cite{mamo} and the entry ``The Log-Convex Density Conjecture'' at Morgan's blog http://blogs.williams.edu/Morgan/).

 We consider in $\Bbb R^n-\{O\}$ the spherical coordinates $(r,\varphi),$ where $\varphi=(\varphi_1,\ldots,\varphi_{n-1}) $ and
\begin{align}\nonumber
r&=|x|,\\
x_1&=r\cos \varphi_1,\nonumber\\
x_{k}&=r\sin\varphi_1\sin\varphi_2\ldots\sin\varphi_{k-1}\cos \varphi_{k},\ \ \ \text{for} \ \ k=2,\ldots, n-1,\\ \nonumber
x_{n}&=r\sin\varphi_1\sin\varphi_2\ldots\sin\varphi_{n-2}\sin \varphi_{n-1.}\nonumber
\end{align}
Let $dV=r^{n-1}d\Omega$ be the volume element  and $dA=r^{n-1}d\omega$ be the perimeter element, where $d\Omega$ and $d\omega$ are the volume elemnent for the unit ball and the perimeter element for the unit hypersphere, respectively.

Taking the exterior derivative of the differential form $\Phi=e^{\psi(r)}dA,$ we get
$$d\Phi= (\psi'+\frac{n-1}{r})e^\psi dV.$$

Denote by $B(r)$ and $S(r)$ the ball and hypersphere about the origin with radius $r.$ We have

\begin{theorem}\label{theo31}
 In $B(r_1)-B(r_0),\ r_1>r_0, $ with spherical density $e^\psi,$ if $r^{n-1}e^{\psi(r)}$ is log-convex,
 every hypersphere about the origin is weighted area-minimizing.
\end{theorem}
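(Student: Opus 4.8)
The plan is to run a calibration argument, but twisted by the volume constraint, since the relevant form is not $d_\psi$-closed and so Theorem \ref{theo21} does not apply verbatim. Set $f(r)=(n-1)\log r+\psi(r)$, so that $e^{f(r)}=r^{n-1}e^{\psi(r)}$ and the log-convexity hypothesis is exactly $f''\ge 0$, i.e.\ $f'(r)=\frac{n-1}{r}+\psi'(r)$ is nondecreasing on $(r_0,r_1)$. Let $dA$ denote the comass-one $(n-1)$-form obtained by extending the area element off the spheres along the unit radial field $\partial_r$, as in Example \ref{ex22}; it restricts to the area element on each $S(r)$ and, by the computation preceding the theorem, satisfies $d(e^\psi dA)=f'(r)\,e^\psi\,dV$. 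Since here ``weighted area-minimizing'' is meant under compact, weighted-volume-preserving deformations, I fix a hypersphere $\Sigma_*=S(r_*)$ bounding $\Omega_*=B(r_*)$ and a competitor $\overline\Sigma=\partial\overline\Omega$ agreeing with $\Sigma_*$ outside a compact subset of the annulus and enclosing the same weighted volume, $V_\psi(\overline\Omega)=V_\psi(\Omega_*)$; the goal is $P_\psi(\overline\Sigma)\ge P_\psi(\Sigma_*)$, where $P_\psi$ and $V_\psi$ denote weighted perimeter and weighted volume.

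The next step is to compare perimeters through the form $e^\psi dA$. Because $dA$ calibrates $\Sigma_*$ we get equality $P_\psi(\Sigma_*)=\int_{\Sigma_*}e^\psi dA$, while comass one gives the inequality $P_\psi(\overline\Sigma)\ge\int_{\overline\Sigma}e^\psi dA$. Writing $A^+=\overline\Omega\setminus\Omega_*$ and $A^-=\Omega_*\setminus\overline\Omega$ and applying Stokes' theorem to each of $\overline\Omega$ and $\Omega_*$,
\[
\int_{\overline\Sigma}e^\psi dA-\int_{\Sigma_*}e^\psi dA=\int_{\overline\Omega}d(e^\psi dA)-\int_{\Omega_*}d(e^\psi dA)=\int_{A^+}f'\,e^\psi\,dV-\int_{A^-}f'\,e^\psi\,dV .
\]
Combining these three facts yields $P_\psi(\overline\Sigma)-P_\psi(\Sigma_*)\ge\int_{A^+}f'\,e^\psi\,dV-\int_{A^-}f'\,e^\psi\,dV$, so it remains to show the right-hand side is nonnegative.

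Here is the crux, where both the geometry of $A^\pm$ and the convexity enter. Put $\lambda=f'(r_*)$, which is (up to the factor $n-1$) the constant weighted mean curvature of $\Sigma_*$, and split $f'=(f'-\lambda)+\lambda$. The $\lambda$-part contributes $\lambda\bigl(V_\psi(A^+)-V_\psi(A^-)\bigr)=\lambda\bigl(V_\psi(\overline\Omega)-V_\psi(\Omega_*)\bigr)=0$ by the volume constraint, which is precisely the Lagrange-multiplier cancellation that replaces $d_\psi$-closedness. For the remainder, note that $A^+\subset\mathbb R^n\setminus B(r_*)$ forces $r\ge r_*$ on $A^+$, whence $f'(r)-\lambda\ge0$ by monotonicity of $f'$, while $A^-\subset B(r_*)$ forces $r\le r_*$, whence $f'(r)-\lambda\le0$; therefore $\int_{A^+}(f'-\lambda)e^\psi dV\ge0$ and $-\int_{A^-}(f'-\lambda)e^\psi dV\ge0$, giving $P_\psi(\overline\Sigma)\ge P_\psi(\Sigma_*)$. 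I expect the main obstacle to be conceptual rather than computational: recognizing that the failure of $e^\psi dA$ to be $d_\psi$-closed produces a bulk term $\int f'e^\psi dV$ that cannot be discarded, and that it is exactly the volume constraint (killing the multiplier term) together with log-convexity (controlling the sign of $f'-\lambda$ on $A^\pm$) that forces this term to have the right sign. The remaining care is bookkeeping—verifying comass one, fixing orientations in Stokes' theorem, and ensuring the competitor's symmetric difference with $B(r_*)$ is compactly contained in the annulus where $f''\ge0$ holds.
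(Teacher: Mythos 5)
Your proposal is correct and is essentially the paper's own proof: the same calibration form $\Phi=e^{\psi}dA$ with $d\Phi=\bigl(\psi'+\tfrac{n-1}{r}\bigr)e^{\psi}dV$, the same decomposition of the symmetric difference into the regions $A^{\pm}$ outside/inside the sphere (the paper's $R^{\pm}$), and the same combination of the monotonicity of $\psi'+\tfrac{n-1}{r}$ with the weighted-volume constraint --- your Lagrange-multiplier splitting $f'=(f'-\lambda)+\lambda$ is just the paper's inequality chain written more explicitly, and your use of $\ge$ where the paper asserts a strict $>$ is in fact the more careful statement. The one repair needed is to apply Stokes' theorem directly to the symmetric difference $A^{+}\cup A^{-}$ (as the paper does with $R=R^{+}\cup R^{-}$, where $\partial R=\overline{S}-S(r_*)$ as chains) rather than to the full regions $\overline{\Omega}$ and $\Omega_*$, since those contain $B(r_0)$ and the origin, where the density and the form $e^{\psi}dA$ need not be defined in the annular setting of the theorem.
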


\begin{proof}
Because $ \left[\log(r^{n-1}e^{\psi(r)})\right]''=(\psi'+\frac{n-1}{r})'=\psi''-\frac{n-1}{r^2}\ge 0,$ we see that  $(\psi'(r)+\frac{n-1}{r})$ is increasing in $B(r_1)-B(r_0).$

Condider a hypersphere $S(r)$ in $B(r_1)-B(r_0)$ and let $\overline S$ be a competitor of $S(r)$ under a compact, weighted-volume-preserving deformation.  Denote by $R^+$ and $R^-$ the regions bounded by $S(r)$ and $\overline S$ lying outside and inside the ball $B(r),$ respectively, and set $R=R^+\cup R^-.$

Because the enclosed weighted volume is preserved

$$\int_{R^+}e^\psi dV_{\Bbb R^{n}}=\int_{R^-}e^\psi dV_{\Bbb R^{n}}.$$

Thus, we have
\begin{align}\text{Area}_\psi(\overline S)-\text{Area}_\psi(S(r))&\ge \int_{\overline S}\Phi-\int_{S(r)}\Phi=\int_{\overline S-S(r)}\Phi=\int_Rd\Phi\nonumber\\
&=\int_{R^+}d\Phi-\int_{R^-}d\Phi\\
&> \left(\psi'(r)+\frac{n-1}{r}\right)\left(\int_{R^+}e^\psi dV_{\Bbb R^{n}}-\int_{R^-}e^\psi dV_{\Bbb R^{n}}\right)= 0.\nonumber
\end{align}

The theorem is proved
\end{proof}
\begin{cor}
In $\Bbb R^n-\{O\}$ with log-convex spherical density
$e^{\psi(r)},$ if $\psi''(r_0)-\frac{n-1}{r_0^2}> 0,$ then  the
hypersphere  $S(r_0)$ is weighted area-minimizing.
 \end{cor}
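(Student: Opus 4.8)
The plan is to run the calibration argument of Theorem \ref{theo31} on all of $\mathbb{R}^n-\{O\}$ rather than on a fixed annulus. As there, I would take the radially extended form $\Phi=e^{\psi(r)}dA$, which has comass one and calibrates every sphere about the origin, and record that $d\Phi=h(r)\,e^{\psi}dV$ with $h(r):=\psi'(r)+\frac{n-1}{r}$. Given a competitor $\overline S$ obtained from $S(r_0)$ by a compact, weighted-volume-preserving deformation, Stokes' theorem together with the comass bound gives $\text{Area}_\psi(\overline S)-\text{Area}_\psi(S(r_0))\ge\int_{R^+}d\Phi-\int_{R^-}d\Phi$, where $R^+$ and $R^-$ are the parts of the region enclosed between $\overline S$ and $S(r_0)$ lying outside and inside $B(r_0)$. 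Writing $d\mu=e^{\psi}dV$, the volume constraint reads $\mu(R^+)=\mu(R^-)$, so the whole statement reduces to the single inequality $\int_{R^+}h\,d\mu\ge\int_{R^-}h\,d\mu$.

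Since $R^+\subseteq\{r>r_0\}$ and $R^-\subseteq\{r<r_0\}$, this inequality follows at once from the pointwise comparison $h(r)\ge h(r_0)$ for $r>r_0$ and $h(r)\le h(r_0)$ for $0<r<r_0$: indeed $\int_{R^+}h\,d\mu\ge h(r_0)\mu(R^+)=h(r_0)\mu(R^-)\ge\int_{R^-}h\,d\mu$, with strict inequality unless $\overline S=S(r_0)$. Thus the task is to prove that $h(r)-h(r_0)$ has the same sign as $r-r_0$ throughout $(0,\infty)$. The only data available are log-convexity, $\psi''\ge0$, and the strict inequality $h'(r_0)=\psi''(r_0)-\frac{n-1}{r_0^2}>0$ at the single radius $r_0$.

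For the outer range $r>r_0$ the decreasing term $\frac{n-1}{r^2}$ only diminishes, so I would show that $h'(s)=\psi''(s)-\frac{n-1}{s^2}$ stays nonnegative for $s>r_0$ and integrate, obtaining $h(r)-h(r_0)=\int_{r_0}^{r}h'(s)\,ds\ge0$, with strict inequality near $r_0$ because $h'(r_0)>0$. With the sign comparison in hand on both sides, the estimate above is strict unless $\overline S=S(r_0)$, so $S(r_0)$ is weighted area-minimizing; the same conclusion also follows by applying Theorem \ref{theo31} to the exhausting annuli $B(r_1)-B(\epsilon)$ and letting $\epsilon\downarrow0$, $r_1\uparrow\infty$ once the comparison is established.

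I expect the main obstacle to be the inner range $0<r<r_0$. There $\frac{n-1}{r^2}$ is unbounded as $r\downarrow0$, so $\psi''\ge0$ alone cannot force $h'(s)\ge0$; in fact $h(r)=\psi'(r)+\frac{n-1}{r}\to+\infty$ as $r\downarrow0$ whenever $\psi'$ has a finite limit there, so the comparison $h(r)\le h(r_0)$ must be handled with care on the whole of $(0,r_0)$ and not merely near $r_0$. Securing the inner comparison amounts to propagating the strict pointwise inequality at $r_0$ into the statement that $r^{n-1}e^{\psi(r)}$ is log-convex on all of $(0,r_0)$ — equivalently $r^{2}\psi''(r)\ge n-1$ there — and it is exactly this propagation from the single radius $r_0$ to the entire inner region, against the blow-up of $\frac{n-1}{r}$, that is the crux on which the argument turns.
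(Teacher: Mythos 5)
There is a genuine gap, and it comes from aiming at the wrong target. The paper's own proof is a one-line localization: since $\psi''(r)-\frac{n-1}{r^2}$ is continuous and strictly positive at $r_0$, there exists $\epsilon>0$ such that $\psi''(r)-\frac{n-1}{r^2}>0$ on $(r_0-\epsilon,r_0+\epsilon)$; equivalently, $r^{n-1}e^{\psi(r)}$ is log-convex on that interval, so Theorem \ref{theo31} applies in the annulus $B(r_0+\epsilon)-B(r_0-\epsilon)$ and $S(r_0)$ is weighted area-minimizing there, i.e.\ under compact, weighted-volume-preserving deformations supported in that annulus. Nothing global is claimed or needed.

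You instead try to run the calibration on all of $\Bbb R^n-\{O\}$, which requires $h(r)=\psi'(r)+\frac{n-1}{r}$ to satisfy $h(r)\ge h(r_0)$ on $(r_0,\infty)$ and $h(r)\le h(r_0)$ on $(0,r_0)$, and your proposal closes neither half. On the outer range, the claim that $h'(s)=\psi''(s)-\frac{n-1}{s^2}$ ``stays nonnegative for $s>r_0$'' does not follow from log-convexity: take $\psi''$ to be a nonnegative bump concentrated near $r_0$, tall enough that $\psi''(r_0)>\frac{n-1}{r_0^2}$ but with small integral, and zero for $s>r_0+\delta$; then $h'(s)=-\frac{n-1}{s^2}<0$ beyond the bump and $h(r)<h(r_0)$ for large $r$. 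On the inner range, what you call the ``crux'' --- propagating the hypothesis to $r^2\psi''(r)\ge n-1$ on all of $(0,r_0)$ --- is not a step awaiting completion but an impossibility: the log-convex density $\psi(r)=r^2$ satisfies $\psi''(r_0)-\frac{n-1}{r_0^2}>0$ for any $r_0>\sqrt{(n-1)/2}$, yet $r^2\psi''(r)=2r^2<n-1$ near the origin; and, as you observe yourself, $h(r)\to+\infty$ as $r\downarrow 0$ whenever $\psi'$ stays bounded, so $h(r)\le h(r_0)$ necessarily fails near $O$. Your fallback of applying Theorem \ref{theo31} to exhausting annuli $B(r_1)-B(\epsilon)$ founders on the same rock: that theorem needs $r^{n-1}e^{\psi}$ log-convex on the whole annulus, which is exactly what is unavailable as $\epsilon\downarrow 0$. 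The repair is to read ``weighted area-minimizing'' as the paper does --- minimality with respect to deformations confined to a thin annulus about $S(r_0)$ --- after which the continuity observation above, together with Theorem \ref{theo31}, is the entire proof.
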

 \begin{proof}
Since $\psi''(r_0)-\frac{n-1}{r_0^2}> 0,$ there exists $\epsilon>0,$  such that $\psi''(r)-\frac{n-1}{r^2}> 0$ (or equivalently, $\psi'+\frac{n-1}{r}$ is strict increasing) in $(r_0-\epsilon, r_0+\epsilon).$
\end{proof}
\begin{cor}
In $\Bbb R^n-\{O\}$ with a strongly log-convex spherical density, there exists $r_0>0,$
 such that every hypersphere about the origin in $\Bbb R^n-B(r_0)$
 is weighted area-minimizing.
\end{cor}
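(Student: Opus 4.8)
The plan is to deduce this from Theorem~\ref{theo31} by observing that the curvature defect $\frac{n-1}{r^2}$ decays to zero, so that a uniform positive lower bound on $\psi''$ eventually dominates it. Recall that the hypothesis powering Theorem~\ref{theo31} is the log-convexity of $r^{n-1}e^{\psi(r)}$, which is exactly the requirement that $g(r):=\psi'(r)+\frac{n-1}{r}$ be increasing, i.e.\ that $\psi''(r)-\frac{n-1}{r^2}\ge 0$. Strong log-convexity of the density is the assumption that $\psi''(r)\ge c$ for some constant $c>0$ (in fact only the behavior of $\psi''$ for large $r$ will matter), and the content of the corollary is that this fixed positive lower bound overtakes $\frac{n-1}{r^2}$ far from the origin.

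First I would set $r_0:=\sqrt{(n-1)/c}$, so that for every $r>r_0$ one has $\frac{n-1}{r^2}<c\le\psi''(r)$, and hence $\psi''(r)-\frac{n-1}{r^2}>0$ on all of $\Bbb R^n-B(r_0)$. Equivalently, $g(r)=\psi'(r)+\frac{n-1}{r}$ is strictly increasing on $(r_0,\infty)$, so $r^{n-1}e^{\psi(r)}$ is log-convex there.

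It then remains to feed this into the calibration estimate of Theorem~\ref{theo31}. Fix a hypersphere $S(r)$ with $r>r_0$ and any competitor $\overline S$ arising from a compact, weighted-volume-preserving deformation. Since such a deformation has compact support, $\overline S$ coincides with $S(r)$ outside a large ball, so the entire configuration---together with the region $R$ it bounds against $S(r)$---lies in a bounded annulus $B(r_1)-B(r_0')$ with $r_0<r_0'<r<r_1$. On this annulus the hypotheses of Theorem~\ref{theo31} are satisfied, and that theorem applies verbatim to give $\text{Area}_\psi(\overline S)\ge\text{Area}_\psi(S(r))$; as $\overline S$ was arbitrary, $S(r)$ is weighted area-minimizing. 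The only step needing care---and the mild obstacle---is precisely this localization: one must note that every admissible competitor is a compactly supported perturbation and therefore fits inside a finite annulus on which the bounded-region Theorem~\ref{theo31} can be invoked directly. Once that is observed, the corollary follows immediately from the estimate $\frac{n-1}{r^2}\to 0$.
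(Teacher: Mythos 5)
Your proof is correct and follows essentially the same route as the paper: use the uniform lower bound $\psi''\ge c>0$ from strong log-convexity together with the decay of $\frac{n-1}{r^2}$ to get $\psi''(r)-\frac{n-1}{r^2}>0$ for $r>r_0$ (the paper's $r_0$ is chosen exactly as your $\sqrt{(n-1)/c}$), then invoke Theorem~\ref{theo31}. Your additional localization remark---that a compactly supported competitor fits inside a bounded annulus where Theorem~\ref{theo31} applies---is a detail the paper leaves implicit, but it does not change the argument.
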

\begin{proof}

Since the density is strongly log-convex,  there exists $r_0>0$ such that if $r>r_0,\ \psi''(r)>M>\frac{n-1}{r^2}.$
Thus, for $r>r_0,\ \psi''(r)-\frac{n-1}{r^2}>0.$ By Theorem \ref{theo31} we have the proof.
\end{proof}
In the case of $r_0=0$ and $r_1=\infty,$ we get
\begin{cor} In $\Bbb R^n-\{O\},$ with spherical density $e^\psi(r),$ if $r^{n-1}e^{\psi(r)}$ is log-convex, then  every hypersphere about the origin is weighted area-minimizing.
\end{cor}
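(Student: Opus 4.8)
The plan is to obtain this corollary as the limiting case $r_0\to 0,\ r_1\to\infty$ of Theorem \ref{theo31}, the essential point being that every competitor arising from a \emph{compact}, weighted-volume-preserving deformation is already confined to a bounded annulus, so the conclusion established for finite annuli transfers verbatim to the punctured space. Thus no new estimate is required; the work is entirely in the reduction.

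First I would fix a hypersphere $S(r)$ about the origin with $r>0$ and let $\overline S$ be an arbitrary competitor under a compact, weighted-volume-preserving deformation in $\Bbb R^n-\{O\}$. By compactness of the deformation, $\overline S$ coincides with $S(r)$ outside some compact set $K\subset\Bbb R^n-\{O\}$; since $K$ is compact and avoids the origin, I can choose radii $0<r_0<r<r_1<\infty$ with $K\subset\overline{B(r_1)}\setminus B(r_0)$. Then both $S(r)$ and $\overline S$ lie in the open annulus $B(r_1)-B(r_0)$, and the given deformation is a compact, weighted-volume-preserving deformation inside this annulus. Next I would observe that the global hypothesis that $r^{n-1}e^{\psi(r)}$ be log-convex on $(0,\infty)$, namely $\left[\log(r^{n-1}e^{\psi(r)})\right]''=\psi''-\frac{n-1}{r^2}\ge 0$, holds in particular on the subinterval $(r_0,r_1)$. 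Hence the hypotheses of Theorem \ref{theo31} are met on $B(r_1)-B(r_0)$, and that theorem yields $\text{Area}_\psi(\overline S)\ge\text{Area}_\psi(S(r))$. As $\overline S$ was arbitrary, $S(r)$ is weighted area-minimizing in $\Bbb R^n-\{O\}$.

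The only point requiring care, and the main (though mild) obstacle, is to check that this reduction to a finite annulus is faithful. The balance condition used inside Theorem \ref{theo31}, namely $\int_{R^+}e^\psi dV=\int_{R^-}e^\psi dV$ for the regions $R^\pm$ bounded by $S(r)$ and $\overline S$, depends only on the deformation itself and not on the ambient region, since $R^+\cup R^-$ is contained in $K\subset B(r_1)-B(r_0)$. Enlarging the ambient region from the annulus back to all of $\Bbb R^n-\{O\}$ therefore changes neither the enclosed weighted volumes nor the relevant perimeters, so the weighted-volume-preserving hypothesis is preserved under the passage. With this observation the argument of Theorem \ref{theo31} applies word for word, and the corollary follows.
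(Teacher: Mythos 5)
Your proposal is correct and follows the paper's own route: the paper obtains this corollary simply by invoking Theorem \ref{theo31} in the degenerate case $r_0=0$, $r_1=\infty$, i.e., by running the same calibration argument on all of $\Bbb R^n-\{O\}$. Your reduction—confining any competitor arising from a compact, weighted-volume-preserving deformation to a finite annulus $B(r_1)-B(r_0)$ and then applying the finite-annulus theorem—is exactly the justification the paper leaves implicit, so it is the same approach with the compactness detail made explicit.
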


\section{Weighted minimizing $k$-hypercylinders}\label{sec4}
Consider the product $\Bbb R^n-\{O\}\times \Bbb R^k,$ where $\Bbb R^n-\{O\}$ endowed with a smooth spherical density $e^{\psi(r)}$ and $\Bbb R^k$ has density 1. We
 call $C(r)=S(r)\times \Bbb R^k$ a $k$-hypercylinder. Because $\nabla\psi$
  is parallel to $\Bbb R^n,$ a $k$-hypercylinder has constant mean curvature.


The second variation formula (\ref{svf}) for $k$-hypercylinders is

\begin{equation}\label{svf1}Q_{\psi}(u,u)=e^\psi\int_{C}|\nabla_{C}u|^2da+
e^\psi(\psi''-\frac{n-1}{r^2})\int_{C}u^2da.\end{equation}

Let $dV$ be the weighted volume element  and $dA$ be the weighted perimeter element in $\Bbb R^n$ as in Section \ref{sec3}. The weighted volume and weighted perimeter elements in $\Bbb R^n-\{O\}\times \Bbb R^k,$
are $dV\wedge dV_{\Bbb R^k}$ and $dA\wedge dV_{\Bbb R^k},$ respectively.

We have $d(dA\wedge dV_{\Bbb R^k})=(\psi'+\frac{n-1}{r})dV\wedge dV_{\Bbb R^k},$ and by a proof as that in Section \ref{sec2}, we get

\begin{theorem}\label{theo41}
 In $C(r_1)-C(r_0),\ r_1>r_0, $ with cylindrical density $e^{\psi(r)},$
 if $r^{n-1}e^{\psi(r)}$ is log-convex, then every hypercylinder is
 weighted area-minimizing.
\end{theorem}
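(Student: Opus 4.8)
The plan is to mirror the proof of Theorem \ref{theo31} almost verbatim, replacing the hypersphere $S(r)\subset\Bbb R^n$ by the hypercylinder $C(r)=S(r)\times\Bbb R^k$ and the weighted perimeter form $\Phi=e^\psi dA$ by $\Phi_C=dA\wedge dV_{\Bbb R^k}$, the weighted area element on cylinders. Since $\Bbb R^k$ carries density $1$ and $\psi$ depends only on $r$, the identity $d(dA\wedge dV_{\Bbb R^k})=(\psi'+\frac{n-1}{r})\,dV\wedge dV_{\Bbb R^k}$ recorded just before the statement shows that $\Phi_C$ plays exactly the role that $\Phi$ played in Section \ref{sec3}: it restricts to the weighted area element on each $C(\rho)$, and its exterior derivative is the same radial weight times the ambient weighted volume element. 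The form $\Phi_C$ is not $d_\psi$-closed, so Theorem \ref{theo21} does not apply directly; instead the non-closedness is to be absorbed by the weighted-volume-preserving constraint, just as in Theorem \ref{theo31}.

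First I would record the consequence of log-convexity. As in the proof of Theorem \ref{theo31}, from $[\log(r^{n-1}e^{\psi(r)})]''=\psi''-\frac{n-1}{r^2}\ge0$ one reads off that $\rho\mapsto\psi'(\rho)+\frac{n-1}{\rho}$ is increasing on $[r_0,r_1]$. Next, I would fix a cylinder $C(r)$ and let $\overline C$ be a competitor arising from a compact, weighted-volume-preserving deformation. I would split the region enclosed between $\overline C$ and $C(r)$ into the part $R^+$ lying outside $C(r)$ (where the radial coordinate exceeds $r$) and the part $R^-$ lying inside (where it is less than $r$), setting $R=R^+\cup R^-$, so that $\overline C-C(r)=\partial R$. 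The preservation of weighted volume then reads
$$\int_{R^+}dV\wedge dV_{\Bbb R^k}=\int_{R^-}dV\wedge dV_{\Bbb R^k}.$$

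The heart of the estimate runs exactly as before. Using the comass-one property of $\Phi_C$ on the product (which, together with the fact that it calibrates $C(r)$, follows from the product structure $\Phi_C=dA\wedge dV_{\Bbb R^k}$ and the corresponding statement for $dA$ on $S(r)$ from Section \ref{sec3}), Stokes' theorem, and the orientations of $R^\pm$,
\begin{align*}
\text{Area}_\psi(\overline C)-\text{Area}_\psi(C(r))&\ge\int_{\overline C}\Phi_C-\int_{C(r)}\Phi_C=\int_R d\Phi_C=\int_{R^+}d\Phi_C-\int_{R^-}d\Phi_C\\
&>\left(\psi'(r)+\frac{n-1}{r}\right)\left(\int_{R^+}dV\wedge dV_{\Bbb R^k}-\int_{R^-}dV\wedge dV_{\Bbb R^k}\right)=0,
\end{align*}
where the strict inequality comes from bounding the integrand $(\psi'+\frac{n-1}{\rho})$ below by its value at $r$ on $R^+$ and above by that value on $R^-$, using the monotonicity just established, and the final equality is the volume constraint.

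The step I expect to need the most care is the passage to Stokes' theorem, because the regions $R^\pm$ are a priori non-compact along the $\Bbb R^k$ factor. This is precisely where the hypothesis that the deformation is \emph{compact} enters: $\overline C$ coincides with $C(r)$ outside a compact set, so $R^\pm$ are bounded regions with piecewise-smooth boundary and every integral above is finite and legitimate. Granting this, the remaining verifications are routine consequences of the product structure, and the theorem follows.
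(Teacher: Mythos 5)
Your proposal is correct and is essentially the paper's own proof: the paper establishes Theorem \ref{theo41} by stating the identity $d(dA\wedge dV_{\Bbb R^k})=(\psi'+\frac{n-1}{r})dV\wedge dV_{\Bbb R^k}$ and referring back to the argument for hyperspheres, which is precisely the calibration-plus-volume-constraint proof you wrote out (decomposition into $R^{\pm}$, monotonicity of $\psi'+\frac{n-1}{r}$ from log-convexity, Stokes' theorem justified by compactness of the deformation). The only blemish, inherited from the paper's own proof of Theorem \ref{theo31}, is that log-convexity yields only non-strict monotonicity of $\psi'+\frac{n-1}{r}$, so the final ``$>$'' should be ``$\ge$''; this weaker inequality still gives weighted area-minimizing.
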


\begin{cor} In $\Bbb R^n-\{O\}\times \Bbb R^k$ with cylindrical denstity $e^{\psi(r)},$
\begin{enumerate}
\item if $\psi''(r_0)-\frac{n-1}{r_0^2}> 0,$ then  the
$k$-hypercylinder  $C(r_0)$ is weighted area-minimizing;
 \item if $\psi$ is strongly convex, there exists $r_0>0$ such that in
$\Bbb R^n-B(r_0)\times\Bbb R^k,$ every $k$-hypercylinder is weighted area-minimizing;
\item if $r^{n-1}e^{\psi(r)}$ is log-convex,
 then every $k$-hypercylinder is weighted area-minimizing.
\end{enumerate}
 \end{cor}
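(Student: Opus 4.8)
The plan is to derive all three parts directly from Theorem~\ref{theo41}, whose only hypothesis is the log-convexity of $r^{n-1}e^{\psi(r)}$ on an annular cylinder $C(r_1)-C(r_0)$. The single computation that ties the statements together is the one already recorded in Section~\ref{sec3}, namely
$$\left[\log\left(r^{n-1}e^{\psi(r)}\right)\right]''=\psi''(r)-\frac{n-1}{r^2},$$
so that log-convexity of $r^{n-1}e^{\psi(r)}$ on an interval is \emph{equivalent} to $\psi''-\frac{n-1}{r^2}\ge 0$ there. (This is exactly the sign appearing in the second variation formula (\ref{svf1}), so the same inequality that makes the cylinders stable is the one that will make them minimizing.) Each part thus reduces to producing this inequality on a suitable radial interval and invoking Theorem~\ref{theo41}.

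For part (1) I would argue by continuity. Since $\psi''(r_0)-\frac{n-1}{r_0^2}>0$ and both terms depend continuously on $r$, there is an $\epsilon>0$ with $\psi''(r)-\frac{n-1}{r^2}>0$ on $(r_0-\epsilon,r_0+\epsilon)$, so $r^{n-1}e^{\psi(r)}$ is (strictly) log-convex there. Applying Theorem~\ref{theo41} with inner radius $r_0-\epsilon$ and outer radius $r_0+\epsilon$ then shows that the hypercylinder $C(r_0)$, which lies strictly inside this annular region, is weighted area-minimizing.

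For part (2), strong convexity of $\psi$ furnishes a constant $M>0$ with $\psi''(r)>M$. Since $\frac{n-1}{r^2}\to 0$ as $r\to\infty$, I would choose $r_0$ so large that $\frac{n-1}{r^2}<M$ for all $r>r_0$; then $\psi''(r)-\frac{n-1}{r^2}>0$ there, and $r^{n-1}e^{\psi(r)}$ is log-convex on $\Bbb R^n-B(r_0)$. Reading Theorem~\ref{theo41} on $C(r_1)-C(r_0)$ with $r_1$ arbitrarily large yields that every hypercylinder in $\Bbb R^n-B(r_0)\times\Bbb R^k$ is weighted area-minimizing.

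Part (3) is the limiting case $r_0\to 0$, $r_1\to\infty$ of Theorem~\ref{theo41}. The one point needing care—and the step I expect to be the genuine, if mild, obstacle—is that a $k$-hypercylinder is non-compact, so the passage from the annular statement to the whole space is not automatic. Here I would exploit that the admissible competitors come from \emph{compact}, weighted-volume-preserving deformations: such a competitor $\overline S$ agrees with $C(r)$ outside a compact set, so the symmetric-difference region $R$ is contained in some bounded annular cylinder $C(r_1)-C(r_0)$ with $0<r_0<r<r_1<\infty$. On that region the log-convexity hypothesis holds by assumption, and the calibration estimate of Theorem~\ref{theo41} applies verbatim, giving $\text{Area}_\psi(\overline S)\ge\text{Area}_\psi\big(C(r)\big)$; since $r$ was arbitrary, every $k$-hypercylinder is weighted area-minimizing.
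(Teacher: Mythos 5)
Your proposal is correct and follows essentially the same route as the paper: the paper derives this corollary from Theorem~\ref{theo41} exactly as the corresponding corollaries of Section~\ref{sec3} are derived from Theorem~\ref{theo31}, namely by continuity of $\psi''-\frac{n-1}{r^2}$ near $r_0$ for part (1), by choosing $r_0$ large enough that $\frac{n-1}{r^2}<M<\psi''$ for part (2), and by letting $r_0\to 0$, $r_1\to\infty$ for part (3). Your added observation in part (3) --- that compact deformations force the symmetric-difference region into a bounded annular cylinder, so the annular theorem suffices --- is a careful justification of a step the paper leaves implicit, but it is the same argument.
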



\section{Weighted minimizing hyperplanes}\label{sec5}
   Let $x=(x_1,x_2,\ldots, x_{n-1})$ and consider $\Bbb R^{n}$ endowed with  smooth density $\delta=e^{\varphi(x)+ \psi(x_n)},$ which can be viewed as product space $\Bbb R^{n-1}\times\Bbb R,$ where $\Bbb R^{n-1}$ has smooth density $e^{\varphi(x)}$ and $\Bbb R$ has smooth density $e^{\psi(x_n)}.$ Let $\Sigma$ be the hyperplane determined by the equation  $x_n=a\in\Bbb R.$ It is easy to see that $\Sigma$ has constant mean curvature, $(\nabla^2\psi)(N,N)=\psi''(a)$ and the second variation formula (\ref{svf}) for $\Sigma$ is

\begin{equation}\label{svf1}Q_{\psi}(u,u)=e^\psi\int_{\Sigma}|\nabla_{\Sigma}u|^2da+
e^\psi\psi''(a)\int_{\Sigma}u^2da.\end{equation}

Since hyperplanes are stable in $\Bbb R^n$ with density 1 and $\int_{\Sigma}|\nabla_{\Sigma}u|^2da$ is nonnegative and vanishes for translation, we have the following

\begin{theorem} \label{lem1}In $\Bbb R^n$ with  smooth density $\delta=e^{\varphi(x)+\psi(x_n)},$ the horizontal hyperplane $x_n=a$ is stable if and only if $\psi''(a)\ge 0.$
\end{theorem}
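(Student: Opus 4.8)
The plan is to read the dichotomy straight off the second variation formula (\ref{svf1}). On the hyperplane $\Sigma=\{x_n=a\}$ the weight $e^{\psi(x_n)}$ restricts to the constant $e^{\psi(a)}$, the principal curvatures vanish so $|\sigma|^2=0$, and $(\nabla^2\psi)(N,N)=\psi''(a)$; hence $Q_\psi$ splits into a nonnegative Dirichlet term $e^{\psi(a)}\int_\Sigma|\nabla_\Sigma u|^2\,da$ and a zeroth-order term $e^{\psi(a)}\psi''(a)\int_\Sigma u^2\,da$ whose sign is that of $\psi''(a)$. Recall that stability here means $Q_\psi(u,u)\ge 0$ for every compactly supported, weighted-volume-preserving $u$, i.e. every $u$ with $\int_\Sigma\delta\,u\,da=0$.

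The sufficiency direction is immediate. If $\psi''(a)\ge 0$, both summands in (\ref{svf1}) are nonnegative for \emph{every} $u$ --- the first because $|\nabla_\Sigma u|^2\ge 0$ pointwise, the second because $\psi''(a)\ge 0$ --- so $Q_\psi(u,u)\ge 0$ without using the constraint at all, and $\Sigma$ is stable.

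For the necessity direction I would assume $\psi''(a)<0$ and build an admissible $u$ with $Q_\psi(u,u)<0$. The idea behind the remark preceding the theorem is that the Dirichlet term vanishes on a translation $u\equiv\text{const}$, on which only the negative zeroth-order term survives; the point is to approximate such a translation inside the compactly supported, mean-zero class. Concretely I would take a spreading plateau $u_R$ on $\Sigma=\Bbb R^{n-1}$, equal to $1$ on a ball $B_R$ and interpolating linearly to $0$ on $B_{2R}\setminus B_R$, so that the Dirichlet energy is carried by the thin annulus while the $L^2$ mass grows with the bulk. One then estimates the weighted Rayleigh quotient $\big(\int_\Sigma\delta|\nabla_\Sigma u_R|^2\,da\big)\big/\big(\int_\Sigma\delta\,u_R^2\,da\big)$ and checks that it tends to $0$ as $R\to\infty$, so that for $R$ large the negative zeroth-order term dominates and $Q_\psi(u_R,u_R)<0$.

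The main obstacle is honoring the volume constraint $\int_\Sigma\delta\,u\,da=0$ without destroying the smallness of the Rayleigh quotient. I would enforce it by using two far-separated plateaus of opposite sign, $u_R(\cdot)-c_R\,u_R(\cdot-y_R)$, with $c_R$ chosen to cancel the weighted mean and $y_R$ far enough that the supports are disjoint so the two energies simply add. The delicate check is that $c_R$ stays bounded away from $0$ and $\infty$ and that the combined quotient still vanishes; this is precisely where the base density $e^{\varphi(x)}$ on $\Sigma$ and the non-compactness of the hyperplane enter, since the construction needs the weighted measure $e^{\varphi}\,da$ on $\Bbb R^{n-1}$ to admit spreading mean-zero functions of vanishing Dirichlet quotient (the absence of a weighted spectral gap). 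Verifying that last property for the densities at hand is the step I expect to be the crux.
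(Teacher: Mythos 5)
Your sufficiency argument is exactly the paper's: on $\Sigma=\{x_n=a\}$ the second variation (\ref{svf1}) is the sum of the Dirichlet term and $e^{\psi(a)}\psi''(a)\int_\Sigma u^2\,da$, both nonnegative when $\psi''(a)\ge 0$, so stability holds without even invoking the volume constraint. That half is correct and identical to the paper's.

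The necessity half is where you diverge, and the ``crux'' you defer at the end is not a technical verification you left for later --- it is a step that is false for the very densities the theorem covers. Your plan needs the weighted measure $e^{\varphi}\,da$ on $\Bbb R^{n-1}$ to admit compactly supported, weighted-mean-zero functions of arbitrarily small Rayleigh quotient (no weighted spectral gap). But take $\varphi(x)=-|x|^2/2$, the Gaussian factor that motivates the whole subject: the Gaussian Poincar\'e inequality gives $\int_\Sigma u^2 e^{\varphi}\,da \le \int_\Sigma |\nabla_\Sigma u|^2 e^{\varphi}\,da$ for every $u$ with $\int_\Sigma u\,e^{\varphi}\,da=0$, so the quotient is bounded below by $1$, and for $-1\le\psi''(a)<0$ every admissible $u$ satisfies $Q_\psi(u,u)\ge e^{\psi(a)}\bigl(1+\psi''(a)\bigr)\int_\Sigma u^2 e^{\varphi}\,da\ge 0$. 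Thus under your notion of stability (compactly supported, weighted-volume-preserving variations) the ``only if'' direction is not merely hard --- it is false: your two-plateau construction, and any other construction, cannot produce $Q_\psi<0$. This is consistent with Borell's theorem that half-spaces are isoperimetric, hence stable, in Gauss space, where $\psi''\equiv-1<0$. The theorem holds only under the paper's reading of stability, in which the vertical translation $u\equiv\text{const}$ is itself an admissible variation; the paper's entire proof is the remark that the Dirichlet term ``is nonnegative and vanishes for translation,'' so that when $\psi''(a)<0$ the constant variation gives $Q_\psi(u,u)=e^{\psi(a)}\psi''(a)\int_\Sigma u^2\,da<0$ at once. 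The repair to your argument is therefore definitional, not analytic: admit translations (drop the compact-support and mean-zero restrictions for this direction), and necessity becomes a one-line computation rather than a spectral-gap construction.
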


By the same arguments as that of Section \ref{sec3}, we have

\begin{theorem}\label{theo52}
 In $\Bbb R^{n-1}\times (a,b),$ where $(a,b)$ is an interval in
 $\Bbb R,$ with smooth density $\delta=e^{\varphi(x)+ \psi(x_n)},$ if $\psi$ is convex, then horizontal hyperplanes are weighted area-minimizing.

  \end{theorem}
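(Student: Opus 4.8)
The plan is to mirror the calibration argument of Section \ref{sec3}, now using the horizontal slicing of $\mathbb{R}^{n-1}\times(a,b)$. First I would introduce the $(n-1)$-form
\[
\Phi = \delta\, dA = e^{\varphi(x)+\psi(x_n)}\, dx_1\wedge\cdots\wedge dx_{n-1},
\]
which is precisely the weighted perimeter element carried by the horizontal hyperplanes. Since $dx_1\wedge\cdots\wedge dx_{n-1}$ evaluates to the cosine of the tilt angle on any unit tangent $(n-1)$-plane, one has $\Phi\le\delta\,d\mathcal{A}$ on every hypersurface (where $d\mathcal{A}$ is the genuine Riemannian area element), with equality exactly on the planes $x_n=\text{const}$. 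Thus $\Phi$ has comass one, it calibrates $\Sigma=\{x_n=c\}$ for any $c\in(a,b)$, and it satisfies $\int_\Sigma\Phi=\text{Area}_\psi(\Sigma)$ while $\int_{\overline\Sigma}\Phi\le\text{Area}_\psi(\overline\Sigma)$ for any competitor $\overline\Sigma$.

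Next I would compute $d\Phi$. Differentiating the coefficient $e^{\varphi(x)+\psi(x_n)}$, every horizontal derivative $\varphi_{x_i}\,dx_i$ with $i\le n-1$ wedges against a repeated factor of $dx_1\wedge\cdots\wedge dx_{n-1}$ and hence dies; only the $x_n$-derivative survives, giving
\[
d\Phi = \psi'(x_n)\,\delta\, dV
\]
once $\mathbb{R}^n$ is oriented compatibly with $\Sigma$. The decisive structural point is that the factor $\varphi(x)$ contributes nothing to $d\Phi$ --- exactly as in Example \ref{ex22}, a density independent of the distinguished direction is invisible to the calibration --- so the only quantity governing minimality is $\psi'(x_n)$, which is nondecreasing precisely because $\psi$ is convex.

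With these two ingredients the argument of Theorem \ref{theo31} transfers. Given a competitor $\overline\Sigma$ obtained from a compact, weighted-volume-preserving deformation of $\Sigma$, let $R^+$ and $R^-$ be the regions enclosed between $\Sigma$ and $\overline\Sigma$ lying in $\{x_n>c\}$ and $\{x_n<c\}$ respectively, and set $R=R^+\cup R^-$; volume preservation reads $\int_{R^+}\delta\,dV=\int_{R^-}\delta\,dV$. Applying Stokes' theorem as in Theorem \ref{theo21} and splitting by orientation gives
\begin{align*}
\text{Area}_\psi(\overline\Sigma)-\text{Area}_\psi(\Sigma)
&\ge \int_{\overline\Sigma}\Phi-\int_{\Sigma}\Phi
=\int_{R}d\Phi
=\int_{R^+}d\Phi-\int_{R^-}d\Phi\\
&=\int_{R^+}\psi'(x_n)\,\delta\,dV-\int_{R^-}\psi'(x_n)\,\delta\,dV\\
&\ge \psi'(c)\left(\int_{R^+}\delta\,dV-\int_{R^-}\delta\,dV\right)=0,
\end{align*}
where the last inequality uses $\psi'(x_n)\ge\psi'(c)$ on $R^+\subset\{x_n>c\}$ and $\psi'(x_n)\le\psi'(c)$ on $R^-\subset\{x_n<c\}$, both consequences of the monotonicity of $\psi'$.

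I expect the only delicate point to be the orientation bookkeeping: one must fix the orientation of $\Sigma$ so that $\Phi|_\Sigma$ is the positive weighted area form, track the induced boundary orientations of $R^+$ and $R^-$ to justify the sign split $\int_R d\Phi=\int_{R^+}d\Phi-\int_{R^-}d\Phi$, and check that this convention makes the coefficient in $d\Phi=\psi'(x_n)\,\delta\,dV$ genuinely positive --- so that convexity of $\psi$, rather than concavity, is what yields minimality. Everything else is a direct specialization of Section \ref{sec3}, and strict convexity of $\psi$ would upgrade the conclusion to strict minimality.
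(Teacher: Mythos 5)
Your proposal is correct and takes essentially the same route as the paper, which establishes Theorem \ref{theo52} ``by the same arguments as that of Section \ref{sec3}'': the calibration form $\Phi=\delta\,dx_1\wedge\cdots\wedge dx_{n-1}$, the computation $d\Phi=\psi'(x_n)\,\delta\,dV$ (with the $\varphi$-part dying on the wedge), the split of the enclosed region into $R^+$ and $R^-$, weighted-volume preservation, and the monotonicity of $\psi'$ from convexity, exactly mirroring the proof of Theorem \ref{theo31}. Your care with the non-strict inequality (since $\psi'$ is only nondecreasing) and with orientation bookkeeping matches, and if anything slightly tightens, the paper's argument.
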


\begin{cor}
In  $\Bbb R^n$ with density $\delta=e^{\varphi(x)+ \psi(x_n)},$ if $\psi''(a)>0,$ then  the horizontal hyperplane $\{x_n= a\}$ is weighted area-minimizing.

 \end{cor}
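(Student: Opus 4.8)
The plan is to reduce the statement to Theorem \ref{theo52} by localizing around the level $x_n = a$. Since $\psi$ is smooth, its second derivative $\psi''$ is continuous, so the strict inequality $\psi''(a) > 0$ propagates to a neighborhood: there exists $\epsilon > 0$ with $\psi''(x_n) > 0$ for all $x_n \in (a-\epsilon, a+\epsilon)$. On this interval $\psi$ is therefore strictly convex, hence convex. This is the exact analogue of the continuity step used in the two corollaries following Theorem \ref{theo31}.

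Having secured convexity on $(a-\epsilon, a+\epsilon)$, I would apply Theorem \ref{theo52} to the slab $\Bbb R^{n-1} \times (a-\epsilon, a+\epsilon)$ equipped with the restriction of the density $\delta = e^{\varphi(x) + \psi(x_n)}$. Since the hypothesis of that theorem (convexity of $\psi$ on the relevant interval) is now met, it yields directly that the horizontal hyperplane $\{x_n = a\}$, which lies inside this slab, is weighted area-minimizing.

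I do not expect a genuine obstacle here; the corollary is an immediate specialization of Theorem \ref{theo52}. The one point requiring care is the interpretation of \emph{weighted area-minimizing} as a local statement. The calibration underlying Theorem \ref{theo52} exists only where $\psi$ is convex, so the minimizing property is asserted against competitors that remain in the slab $\Bbb R^{n-1} \times (a-\epsilon, a+\epsilon)$. Because admissible deformations are compactly supported and weighted-volume-preserving, choosing $\epsilon$ as above guarantees that every such competitor is admissible for the slab, and the conclusion follows.
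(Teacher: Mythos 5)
Your proposal is correct and follows exactly the paper's own argument: the paper's proof likewise uses continuity of $\psi''$ to obtain $\epsilon>0$ with $\psi''>0$ on $(a-\epsilon,a+\epsilon)$, so that $\psi$ is convex there and Theorem \ref{theo52} applies to the slab $\Bbb R^{n-1}\times(a-\epsilon,a+\epsilon)$. Your remarks about restricting competitors to the slab simply make explicit what the paper leaves implicit.
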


\begin{cor} \label{cor54}
In  $\Bbb R^n$ with density $e^{r^2}$, where $r$ is the distance from point to the origin, every hyperplane is weighted area-minimizing.
 \end{cor}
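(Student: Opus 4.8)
The plan is to reduce an arbitrary hyperplane to a horizontal one and then invoke Theorem \ref{theo52}, exploiting the fact that the density $e^{r^2}$ is rotationally symmetric.

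First I would record that the density fits the product form of Section \ref{sec5}. Writing $x=(x_1,\dots,x_{n-1})$ and $r^2=|x|^2+x_n^2$, we have $\delta=e^{r^2}=e^{\varphi(x)+\psi(x_n)}$ with $\varphi(x)=|x|^2$ and $\psi(t)=t^2$. Since $\psi''(t)=2>0$ everywhere, $\psi$ is convex on all of $\Bbb R$, so Theorem \ref{theo52} (with the interval taken to be all of $\Bbb R$) shows that every horizontal hyperplane $\{x_n=a\}$ is weighted area-minimizing. Note that only $\psi$ enters the hypothesis: in the underlying calibration computation the $\varphi$-part of $d\delta$ is annihilated upon wedging with $dx_1\wedge\cdots\wedge dx_{n-1}$, so the choice $\varphi(x)=|x|^2$ imposes no extra condition.

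Next I would treat a general hyperplane $\Sigma=\{\langle x,\nu\rangle=a\}$ with unit normal $\nu$ and $a\ge 0$. Choose $\rho\in O(n)$ with $\rho(\nu)=e_n$; then $\rho(\Sigma)=\{x_n=a\}$. Because $r$ is invariant under $\rho$, the map $\rho$ is an isometry of $\Bbb R^n$ preserving the density $e^{r^2}$, hence it preserves both weighted perimeter and weighted volume and carries compact, weighted-volume-preserving deformations to deformations of the same type. Consequently the property of being weighted area-minimizing is invariant under $\rho$; since $\rho(\Sigma)=\{x_n=a\}$ has this property by the previous step, so does $\Sigma$.

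The computation behind the horizontal case is the only place convexity is used, and it presents no difficulty here: the relevant form satisfies $d(\delta\,dx_1\wedge\cdots\wedge dx_{n-1})=\pm\,\psi'(x_n)\,\delta\,dV$, and $\psi'(t)=2t$ is strictly increasing, so the monotonicity driving the Stokes-theorem estimate (exactly as in Theorem \ref{theo31}) holds at every level. The one point deserving care, and the main though mild obstacle, is the symmetry reduction: one must verify that the class of admissible competitors, namely compact weighted-volume-preserving deformations, is genuinely preserved by $\rho$. This holds precisely because $\rho$ fixes the origin and therefore the density $e^{r^2}$, which is what makes the reduction to horizontal hyperplanes legitimate.
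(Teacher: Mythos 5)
Your proposal is correct and follows essentially the same route as the paper: apply Theorem \ref{theo52} to horizontal hyperplanes (using the decomposition $e^{r^2}=e^{|x|^2}e^{x_n^2}$ with $\psi(t)=t^2$ convex), then use orthogonal transformations fixing the origin, which preserve $r$ and hence the density, to carry the result to arbitrary hyperplanes. Your write-up merely makes explicit the details (the product decomposition, and that such transformations preserve the class of compact weighted-volume-preserving competitors) that the paper's two-line proof leaves implicit.
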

\begin{proof}
 By virtue of Theorem \ref{theo52}, hyperplanes  perpendicular to any axis are weighted area-minimizing. Since orthogonal transformations (fixing the origin) preserve $r,$  every hyperplane is weighted area-minimizing.
\end{proof}

\begin{remark}
In  $\Bbb R^n$ with density $e^{cr^2},\ c>0,$  hyperspheres are uniquely isoperimetric (\cite[Theorem 4.1]{bo2}), \cite[Theorem 5.2]{RCBM}). This does not contradict our Corollary \ref{cor54}, in which the behavior at infinity is fixed.

\end{remark}


{\bf Acknowledgements.} We would like to thank  Professor Frank Morgan for bringing the topic to our attention and for his help and discussion.

\bibliographystyle{amsplain}

\end{document}